\documentclass[11pt]{amsart}

\usepackage{amsmath,amssymb,graphicx}
\usepackage{verbatim}
\usepackage{caption}
\usepackage{subcaption}
\sloppy
\usepackage{graphicx, amssymb, enumerate, pstricks, pst-node}
\usepackage{pst-all}
\usepackage{epsfig}
\usepackage{epic}
\usepackage{url}
\usepackage{latexsym}
\usepackage{amsmath}
\usepackage{amsfonts}
\usepackage{amssymb}
\usepackage{amssymb,stmaryrd}
\usepackage{tikz,graphics}
\usepackage{tikz}
\usepackage{mathdots}
\usepackage{xcolor}
\newtheorem{theorem}{Theorem}[section]
\newtheorem{lemma}[theorem]{Lemma}

\newtheorem{proposition}[theorem]{Proposition}

\newtheorem{sublemma}{}[theorem]

\theoremstyle{definition}

\newtheorem{example}[theorem]{Example}
\theoremstyle{remark}

\numberwithin{equation}{section}
\newcommand{\del}{\backslash}

\DeclareMathOperator{\simp}{si}
\nonstopmode
\begin{document}

\title[On the cogirth of binary matroids]{On the cogirth of binary matroids}
\author{Cameron Crenshaw}
\address{Mathematics Department\\
Louisiana State University\\
Baton Rouge, Louisiana, USA}
\email{ccrens5@lsu.edu}

\author{James Oxley}
\address{Mathematics Department\\
Louisiana State University\\
Baton Rouge, Louisiana, USA}
\email{oxley@math.lsu.edu}

\keywords{binary matroid, representable matroid, cogirth}


\date{\today}

\begin{abstract}
The cogirth, $g^\ast(M)$, of a matroid $M$ is the size of a smallest cocircuit of $M$. Finding the cogirth of a graphic matroid can be done in polynomial time, but Vardy showed in 1997 that it is NP-hard to find the cogirth of a binary matroid. In this paper, we show that $g^\ast(M)\leq \frac{1}{2}\vert E(M)\vert$ when $M$ is binary, unless $M$ simplifies to a projective geometry. We also show that, when equality holds, $M$ simplifies to a Bose-Burton geometry, that is, a matroid of the form $PG(r-1,2)-PG(k-1,2)$. These results extend to matroids representable over arbitrary finite fields.
\end{abstract}

\maketitle

\section{Introduction}

For an arbitrary graph $G$, the well-known fact that the degree sum of $G$ is twice the number of edges of $G$ implies that
\begin{equation*}
\frac{\vert E(G)\vert}{\delta(G)}\geq \frac{1}{2}\vert V(G)\vert,
\end{equation*}
where $\delta(G)$ is the minimum degree of $G$. In a matroid $M$ of nonzero rank, the \emph{cogirth}, $g^\ast(M)$, of $M$ is the size of a smallest cocircuit of $M$. As $U_{r,n}$ shows, $\frac{\vert E(M)\vert}{g^\ast(M)}$ can be arbitrarily close to 1 even for simple matroids, although it is bounded below by $\frac{1}{2}(r(M)+1)$ when $M$ is graphic.

In this paper, we show that, when $M$ is binary,
\begin{equation*}
\frac{\vert E(M)\vert}{g^\ast(M)}\geq 2
\end{equation*}
unless $M$ simplifies to a projective geometry. We also characterize the matroids that achieve equality in this bound. Both of these results are special cases of results for matroids representable over arbitrary finite fields.

The terminology used here will follow Oxley~\cite{oxley} with the following addition. We will often use $P_r$ and $A_r$ to denote $PG(r-1,q)$ and $AG(r-1,q)$, respectively, where $q$ should be clear from the context. The next two results are the main results of the paper.

\begin{theorem}
\label{notpgintro}
For $r\geq 1$, let $M$ be a rank-$r$ matroid representable over $GF(q)$ whose simplification is not $P_r$. Then $$\frac{\vert E(M)\vert}{g^\ast(M)}\geq \frac{q}{q-1}.$$
Moreover, equality holds if and only if $M$ is loopless and, for a fixed embedding of $\simp(M)$ in $P_r$,
\begin{itemize}
\item[(i)]{the complement of $\simp(M)$ in $P_r$ is isomorphic to $P_k$ for some $k$ with $1\leq k < r$; and}
\item[(ii)]{if $P$ is a copy of $P_{k+1}$ in $P_r$ containing the complement of $\simp(M)$, then the parallel classes of the elements in $E(M)\cap E(P)$ all have the same size; and}
\item[(iii)]{$\vert E(N) \vert\geq (q-1)\vert E(M)-E(N)\vert$ for every restriction $N$ of $M$ that simplifies to $A_r$.}
\end{itemize}
\end{theorem}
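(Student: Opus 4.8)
The plan is to treat the inequality and the equality case separately, reducing the latter to a self-contained rigidity statement about weight functions on $P_r$. Write $N=\simp(M)$, fix an embedding $N\hookrightarrow P_r$, and set $C=E(P_r)\setminus E(N)$; if $C=\emptyset$ then $N=P_r$ and there is nothing to prove, so assume $C\neq\emptyset$, whence $r\geq 2$ (the simplification of a rank-$1$ matroid is $P_1$). For $y\in E(N)$ let $t_y$ be the size of its parallel class in $M$, let $\ell$ be the number of loops of $M$, put $W=\sum_{y\in E(N)}t_y=|E(M)|-\ell$, and for $S\subseteq E(P_r)$ write $w(S)=\sum_{y\in S\cap E(N)}t_y$. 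Since each hyperplane of $M$ consists of the loops of $M$ together with the parallel classes lying in some hyperplane $H_N$ of $N$, and since $\ell+w(H_N)\geq\tfrac1q(\ell+W)$ as soon as $w(H_N)\geq W/q$, it suffices to produce a hyperplane $H_N$ of $N$ with $w(H_N)\geq W/q$; its complementary cocircuit of $M$ then has size at most $\tfrac{q-1}{q}|E(M)|$. The key step is an averaging trick: pick $p_0\in C$ and average $w(H)$ over the $q^{\,r-1}$ hyperplanes $H$ of $P_r$ not containing $p_0$. Each point $y\neq p_0$ lies in exactly $q^{\,r-2}$ of them, and $w(p_0)=0$, so the average equals $q^{\,r-2}W/q^{\,r-1}=W/q$ \emph{exactly}; hence some such $H$ has $w(H)\geq W/q$. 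As $H\cap E(N)$ is a proper flat of $N$ it extends to a hyperplane $H_N$ of $N$, and $w(H_N)\geq w(H)\geq W/q$, proving the inequality.

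For equality, use $g^\ast(M)=|E(M)|-(\text{largest hyperplane of }M)$: equality says the largest hyperplane of $M$ has exactly $|E(M)|/q$ elements, so $\ell+w(H_N)\leq\tfrac1q(\ell+W)$ for every hyperplane $H_N$ of $N$; comparing with the hyperplane just produced forces $\ell=0$, and then $w(H_N)\leq W/q$ for all such $H_N$, hence (extending again) $w(H)\leq W/q$ for \emph{every} hyperplane $H$ of $P_r$. Re-running the averaging with an arbitrary $p_0\in C$ then shows $w(H)=W/q$ whenever $C\not\subseteq H$ (the average is $W/q$ and no term exceeds it). Thus equality is equivalent to: $M$ is loopless, $w(H_0)\leq W/q$ for every hyperplane $H_0\supseteq C$, and a rigidity statement below gives (i) and (ii). The middle condition is exactly (iii): a restriction of $M$ simplifies to $A_r=AG(r-1,q)$ precisely when it is the restriction $R$ keeping the parallel classes of the points off some hyperplane $H_0\supseteq C$ of $P_r$, and then $|E(R)|=W-w(H_0)$ and $|E(M)\setminus E(R)|=w(H_0)$, so $|E(R)|\geq(q-1)|E(M)\setminus E(R)|\iff w(H_0)\leq W/q$. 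The \emph{rigidity statement} is: if $w\geq 0$ on $E(P_r)$ has zero-set $C\neq\emptyset$ and $w(H)=W/q$ for all hyperplanes $H\not\supseteq C$, then $C$ is a flat, hence a copy $P_k$ of $PG(k-1,q)$ with $1\leq k<r$, and $w$ is constant on $E(P)\setminus C$ for every $P_{k+1}$-flat $P\supseteq C$. Conversely, (i)+(ii) force $w(H)=W/q$ for all $H\not\supseteq C$: the $P_{k+1}$-flats $P\supseteq C$ meet pairwise in $C$ and their affine parts $E(P)\setminus C$ (each of size $q^{k}$) partition $E(N)$, every $H\not\supseteq C$ meets each $E(P)\setminus C$ in $q^{k-1}$ of its points, and by (ii) all of $E(P)\setminus C$ carries a common weight $t_P$, so $w(H)=q^{k-1}\sum_P t_P=W/q$; together with (iii) this yields $w\leq W/q$ on every hyperplane of $M$, which with the inequality forces equality. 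So everything reduces to the rigidity statement.

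To prove the rigidity statement, first show $C$ is a flat: if $z\in\cl(C)\setminus C$ then every hyperplane missing $z$ also misses $C$, so in the standard double-count $\sum_{H\not\ni z}w(H)=(W-w(z))q^{\,r-2}$ every term equals $W/q$ and hence $w(z)=0$, a contradiction; the same identity (for any $z$, in case $\cl(C)=E(P_r)$) would force $w\equiv 0$, so $C$ does not span and $1\leq k=\operatorname{rank}(C)<r$. For the constancy I would induct on $r$. If $k=1$, say $C=\{p_0\}$: given a line $L\ni p_0$, choose a hyperplane $H_0$ of $P_r$ with $L\subseteq H_0\ni p_0$; a pencil count (the $q+1$ hyperplanes of $P_r$ through a rank-$(r-2)$ flat $G\subseteq H_0$ with $p_0\notin G$ are $H_0$ together with $q$ hyperplanes missing $p_0$) gives $w(G)=w(H_0)/q$ for all such $G$, so $(H_0,w|_{H_0})$ again satisfies the hypotheses, now in rank $r-1$; by induction $w$ is constant on $L\setminus\{p_0\}$, the base case $r=2$ being immediate. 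If $k\geq 2$, contract each rank-$(k-1)$ subflat $C'$ of $C$: on $P_r/C'\cong PG(r-k,q)$ the induced weight has zero-set the single point $\overline C$ and satisfies the same hypotheses, so the $k=1$ case gives that $w$ has equal sums on the $q$ affine hyperplanes of $E(P)\setminus C\cong AG(k,q)$ in the direction of $C'$; letting $C'$ range over all subhyperplanes of $C$ gives equal sums over every parallel class of affine hyperplanes of $AG(k,q)$, and a short Fourier argument on $\mathbb{F}_q^{\,k}$ (every nontrivial additive character is $\psi_0\circ\ell$ for a nonzero linear functional $\ell$, and the hypothesis kills $\widehat{w}$ at each such character) forces $w$ constant on $E(P)\setminus C$.

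The main obstacle is the rigidity statement, and within it the derivation of condition (ii). The subtlety is that the hypothesis ``$w(H)=W/q$ for every hyperplane not containing $C$'' does \emph{not} force $w$ to be globally constant off $C$ — in the extremal examples the slices $E(P)\setminus C$ may carry different constant weights — so no off-the-shelf ``constant hyperplane sums $\Rightarrow$ constant function'' lemma applies; the weight has to be localized to a single slice, which is exactly what the contraction-to-a-point step accomplishes, and establishing the $k=1$ base case (constancy along affine lines aimed at the deleted point) is the real crux of the argument.
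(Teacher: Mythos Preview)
Your proof is correct and takes a genuinely different route from the paper's.

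For the inequality, the paper argues by induction on $r$: if some $M/e\not\cong P_{r-1}$ one applies induction directly; otherwise one picks a line meeting both $E(M)$ and its complement, lets $e$ be a heaviest point of $M$ on it, and applies induction to $M\backslash Y/e$. You instead use a one-shot averaging argument over the $q^{r-1}$ hyperplanes of $P_r$ missing a fixed point $p_0\in C$, which is shorter and avoids induction altogether.

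For the equality characterization the trade-off reverses. The paper traces equality back through its inductive chain: equality forces $w(Y)=(q-1)w(e)$ on every line meeting both $M$ and $M^c$, which instantly gives both that $M^c$ is a flat (condition~(i)) and that $w$ is constant on each such line, hence constant on each affine slice $E(P)\setminus M^c$ (condition~(ii)), since any two points of that slice lie on a line meeting $M^c$. Your route to (i) and (ii) goes through the ``rigidity statement'': constant hyperplane sums off $C$ force $C$ to be a flat (nice double-count), and then constancy on each affine slice is obtained by a second induction (for $k=1$) followed by a contraction to the $k=1$ case and a Fourier argument on $\mathbb{F}_q^{\,k}$. This works, but it is considerably heavier than the paper's two-line deduction of (ii). One small remark: your parenthetical ``every nontrivial additive character is $\psi_0\circ\ell$'' is not literally true for $q$ a prime power (the kernel of a character is an $\mathbb{F}_p$-hyperplane, not an $\mathbb{F}_q$-hyperplane); what you actually use, and what is true, is that every nontrivial character has the form $g\circ\ell$ with $\ell$ an $\mathbb{F}_q$-linear functional and $g:\mathbb{F}_q\to\mathbb{C}$ of mean zero, which is enough to kill $\widehat{w}$.

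For (iii) and the converse direction, both arguments are essentially the same: (iii) is exactly the statement that type-I cocircuits are not too small, and (i)+(ii) force every type-II cocircuit to have weight exactly $\tfrac{q-1}{q}w(M)$; your partition argument for the latter is a clean variant of the paper's Sublemma. Your reading of (iii) as ranging over ``full'' restrictions (unions of parallel classes) matches the paper's own weight reformulation.
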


This theorem excludes the matroids $M$ for which $\simp(M)\cong PG(r-1,q)$. These excluded matroids are covered by the next result.

\begin{proposition}
\label{pgintro}
For $r\geq 1$, let $M$ be a matroid that simplifies to $PG(r-1,q)$. Then $$\frac{\vert E(M)\vert}{g^\ast(M)}\geq\frac{q^r-1}{q^{r-1}(q-1)}.$$
Moreover, equality holds if and only if $M$ is loopless and all its parallel classes have the same size.
\end{proposition}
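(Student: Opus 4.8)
The plan is to translate everything into weighted counting on the hyperplanes of $P_r=PG(r-1,q)$ and then apply an averaging argument. Write $n=(q^r-1)/(q-1)$ for the common number of points and of hyperplanes of $P_r$. Since $\simp(M)\cong P_r$, identify the parallel classes of $M$ with the points of $P_r$; let $a_p\ge 1$ be the size of the parallel class at the point $p$, and let $\ell\ge 0$ be the number of loops of $M$, so $|E(M)|=\ell+\sum_p a_p$. Every loop lies in $\cl(\emptyset)$ and hence in every hyperplane of $M$, while the hyperplanes of $M\setminus(\text{loops})$ are exactly the pullbacks of the hyperplanes of $P_r$; since the cocircuits of $M$ are the complements of its hyperplanes, they are precisely the sets $\bigcup_{p\notin H}(p\text{-class})$ as $H$ ranges over the hyperplanes of $P_r$, and $g^\ast(M)=\min_H\sum_{p\notin H}a_p$.

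For the inequality, the key fact is that each point of $P_r$ lies outside exactly $q^{r-1}$ of the $n$ hyperplanes, so $\sum_H\sum_{p\notin H}a_p=q^{r-1}\sum_p a_p$. Hence $g^\ast(M)=\min_H\sum_{p\notin H}a_p$ is at most the average $\frac{q^{r-1}}{n}\sum_p a_p\le\frac{q^{r-1}}{n}|E(M)|$, and rearranging gives $|E(M)|/g^\ast(M)\ge n/q^{r-1}=(q^r-1)/(q^{r-1}(q-1))$. Equality forces $\ell=0$, and, because a minimum of finitely many reals equals their average only when all are equal, it also forces $\sum_{p\notin H}a_p$ — equivalently $\sum_{p\in H}a_p$ — to be independent of the hyperplane $H$. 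Conversely, if $\ell=0$ and all $a_p$ equal a common value $a$, then $\sum_{p\notin H}a_p=aq^{r-1}$ for every $H$, so equality holds. Thus the proposition reduces to showing that constant hyperplane sums force the $a_p$ to be constant; this is the real content of the equality statement and the step I expect to need the most care.

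For that step (with $r\ge 2$; the case $r=1$ is trivial since $P_1$ has a single point), I would double count: summing $\sum_{p\in H}a_p$ over the $k=(q^{r-1}-1)/(q-1)$ hyperplanes $H$ through a fixed point $p_0$, a point $p\ne p_0$ is counted once for each hyperplane through the line $\overline{p_0p}$, i.e. $\lambda=(q^{r-2}-1)/(q-1)$ times, while $p_0$ is counted $k$ times. Writing $A=\sum_p a_p$ and $c'$ for the common hyperplane sum, this gives $kc'=(k-\lambda)a_{p_0}+\lambda A$, hence $a_{p_0}=(kc'-\lambda A)/q^{r-2}$ (using $k-\lambda=q^{r-2}\ne 0$), which does not depend on $p_0$. (Equivalently, one may observe that the point–hyperplane incidence matrix of $P_r$ is that of a symmetric $2$-design, hence invertible over $\mathbb Q$, so the only weighting with all hyperplane sums equal is the constant one.) Apart from this, the only thing needing attention is the bookkeeping in the first paragraph — that loops contribute to $|E(M)|$ but to no cocircuit, that cocircuits of $M$ correspond bijectively to complements of hyperplanes of $P_r$, and the elementary point/hyperplane counts in $PG(r-1,q)$ that drive the averaging and double-counting identities.
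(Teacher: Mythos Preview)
Your argument is correct and takes a genuinely different, more direct route than the paper. The paper proves the inequality by induction on $r$: it picks a minimum-weight cocircuit $C^\ast$ with complementary hyperplane $H\cong P_{r-1}$, a maximum-weight hyperplane $Z$ of $H$, and then combines the rank-$2$ case applied to $M/Z\cong P_2$ with the inductive bound applied to $H$; the equality case is also handled inductively, propagating constancy of $w$ from $H$ to the other hyperplanes through $Z$. By contrast, you bypass induction entirely: the bound drops out of a one-line averaging over all hyperplanes (using only that each point lies outside exactly $q^{r-1}$ of them), and the equality characterization comes from the design-theoretic fact that the point--hyperplane incidence of $P_r$ is a symmetric $2$-design with $k-\lambda=q^{r-2}\neq 0$, so equal hyperplane sums force equal point weights. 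Your approach is shorter and conceptually cleaner for this proposition in isolation; the paper's inductive framework, on the other hand, mirrors the structure of its main theorem (where no such global averaging is available) and reuses the same rank-$2$ lemma as a common base case.
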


Condition (i) in Theorem~\ref{notpgintro} says that $M$ simplifies to a \emph{Bose-Burton geometry}~\cite{BB}, that is, a matroid that is obtained from $PG(r-1,q)$ by deleting some $PG(k-1,q)$ where $1\leq k<r$. In each of our results, the bound on $\tfrac{\vert E(M)\vert}{g^\ast(M)}$ is relatively easy to obtain. The core of each proof involves characterizing when equality holds in the bound. The proofs appear in Section~\ref{proofs}.

\section{Preliminaries}
\label{prelims}

In a matroid $M$ of rank at least one, a loop contributes to $\vert E(M)\vert$ but not to $g^\ast(M)$. Since our concern here is on bounding $\tfrac{\vert E(M)\vert}{g^\ast(M)}$ below, we shall focus on matroids without loops. It will be convenient here to deal with the parallel classes in such a matroid $M$ by assigning, to each element of $\simp(M)$, a \emph{weight} $w(e)$ that is equal to the cardinality of the parallel class of $M$ that contains $e$. Thus we deal with simple matroids with associated weight functions that take a positive-integer value on each element. For a set $X$ in such a matroid $N$, we write $w(X)$ for $\sum_{x\in X}w(x)$ and write $w(N)$ for $w(E(N))$. The weight function of $N\del Y$ is the restriction of the weight function of $N$ to $E(N)-Y$. When $Y$ is contracted from $N$, we replace each parallel class $P$ by a single element $e_P$ whose weight in the contraction is $w_N(P)$. We will call this weighted simple matroid the \emph{weighted contraction} of $Y$ and denote it by $N/Y$, even though the underlying matroid is actually $\simp(N/Y)$. The cogirth of a weighted matroid is the minimum weight of a cocircuit.

\section{The Proofs} 
\label{proofs}

We begin with a lemma that serves as the base case for both of the inductive arguments that prove the inequalities in the main results.

\begin{lemma}
\label{rank2}
Let $M$ be a simple, rank-$2$ matroid representable over $GF(q)$, and let $w$ be a weight function on $M$. Then $$\frac{w(M)}{g^\ast(M)}\geq \frac{\vert E(M)\vert}{\vert E(M)\vert-1},$$
with equality if and only if $w$ is constant.
\end{lemma}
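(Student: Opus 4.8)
The plan is to exploit the very restricted cocircuit structure of a rank-$2$ matroid. Since $M$ is simple, its rank-$1$ flats are exactly the singletons $\{e\}$ with $e\in E(M)$, so the hyperplanes of $M$ are precisely these singletons, and hence the cocircuits of $M$ are exactly the sets $E(M)-e$. In the weighted setting this gives
$$g^\ast(M)=\min_{e\in E(M)}\bigl(w(M)-w(e)\bigr)=w(M)-\max_{e\in E(M)}w(e).$$
Because $|E(M)|\geq 2$ and $w$ takes positive values, every cocircuit has positive weight, so $g^\ast(M)>0$ and the ratio $w(M)/g^\ast(M)$ is well defined.

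Next I would reduce the claimed inequality to a triviality. Write $n=|E(M)|$, $W=w(M)$, and $w_{\max}=\max_{e\in E(M)}w(e)$. Since both $W-w_{\max}=g^\ast(M)$ and $n-1$ are positive, the inequality $\dfrac{W}{W-w_{\max}}\geq\dfrac{n}{n-1}$ is, after cross-multiplying, equivalent to $W(n-1)\geq n(W-w_{\max})$, that is, to $n\,w_{\max}\geq W$. But $W=\sum_{e\in E(M)}w(e)\leq n\,w_{\max}$, so the inequality holds. Tracing the equivalences back, equality holds exactly when $w(e)=w_{\max}$ for every $e\in E(M)$, i.e., when $w$ is constant; this settles the equality statement as well.

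There is essentially no obstacle: once the cocircuits are identified, the argument is a one-line estimate $\sum w(e)\leq n\,w_{\max}$, and the only point needing a moment's care is checking $g^\ast(M)>0$ so that the ratio and the cross-multiplication step are legitimate. Representability of $M$ over $GF(q)$ plays no role here beyond fixing the ambient context; the proof uses only that $M$ is simple of rank $2$.
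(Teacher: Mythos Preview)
Your proof is correct and is essentially the same as the paper's: both identify that the cocircuits of a simple rank-$2$ matroid are the complements of single elements, so $g^\ast(M)=w(M)-w_{\max}$, and then reduce the inequality to $n\,w_{\max}\geq\sum_e w(e)$ (the paper writes this equivalently as $(n-1)w_n\geq w_1+\cdots+w_{n-1}$), with equality exactly when $w$ is constant.
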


\begin{proof}
Let $w_1\leq w_2\leq \cdots\leq w_n$ be the weights of the elements of $M$. As $g^\ast(M)=w_1+w_2+\cdots+w_{n-1}$, the desired inequality is equivalent to
\begin{equation*}
(n-1)(w_1+w_2+\cdots+w_n)\geq n(w_1+w_2+\cdots+w_{n-1}).
\end{equation*}
Subtracting $(n-1)(w_1+w_2+\cdots+w_{n-1})$ from each side, we obtain
\begin{equation}
\label{rank2eqtn}
(n-1)w_n\geq w_1+w_2+\cdots+w_{n-1},
\end{equation}
which is true since $w_n\geq w_i$ for all $i$. Note that equality holds in~(\ref{rank2eqtn}) if and only if $w_i=w_n$ for all $i$.
\end{proof}

The following is the main result of the paper. It is equivalent to Theorem~\ref{notpgintro} and is stated here in terms of weights.

\begin{theorem}
\label{notpgweight}
Let $M$ be a simple, rank-$r$ matroid representable over $GF(q)$, and let $w$ be a weight function on $M$. Suppose $M\not\cong P_r$. Then $$\frac{w(M)}{g^\ast(M)}\geq \frac{q}{q-1}.$$
Moreover, equality holds if and only if, for a fixed embedding of $M$ in $P_r$,
\begin{itemize}
\item[(i)]the complement of $M$ is isomorphic to $P_k$, with $1\leq k<r$; and
\item[(ii)]{if $P$ is a copy of $P_{k+1}$ containing the complement of $M$ in $P_r$, then $w$ is constant on $P$; and}
\item[(iii)]{$w(N)\geq (q-1)w(E(M) - E(N))$ for every $A_r$-restriction $N$ of $M$.}
\end{itemize}
\end{theorem}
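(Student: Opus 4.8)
The plan is to argue by induction on $r$, using Lemma~\ref{rank2} as the base case $r = 2$ (the case $r = 1$ being trivial, since then $M$ is a single point and $g^\ast(M) = w(M)$). For the inductive step, the natural move is to pick a cocircuit $C^\ast$ realizing the cogirth, so that $C^\ast = E(M) - H$ for some hyperplane $H$ of $M$. The restriction $M|H$ is a simple matroid of rank $r-1$ representable over $GF(q)$, inheriting the weight function $w$. If $M|H \not\cong P_{r-1}$, apply the inductive hypothesis to $M|H$ to get $w(H) \geq \tfrac{q}{q-1} g^\ast(M|H)$; if $M|H \cong P_{r-1}$, use Proposition~\ref{pgintro} instead (applied to the hyperplane, where it gives the cleaner bound). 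In either case one needs to relate $g^\ast(M|H)$ to $g^\ast(M)$ — a cocircuit of $M|H$ extends (perhaps non-uniquely) to something inside $M$, and the complementary-counting $|E(M)| = |C^\ast| + |H|$ combined with the hyperplane/cocircuit duality should let us bootstrap. I expect the cleanest version of the inequality to come not from a single hyperplane but from averaging over all hyperplanes through a fixed flat, or equivalently from a clever choice of which hyperplane to delete/contract.

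A cleaner route to the \emph{inequality} alone: contract a point $e$ to form the weighted matroid $M/e$ of rank $r-1$, which is again $GF(q)$-representable. Every cocircuit of $M/e$ is a cocircuit of $M$ not containing $e$, hence has weight at least $g^\ast(M)$; meanwhile $w(M/e) = w(M)$ by the definition of weighted contraction (the weight of $e_P$ in $M/e$ is $w_M(P)$, and the parallel classes of $M/e$ partition $E(M) - e$ together with... actually $w(M/e) = w(M) - w(e)$ — I'd need to track this carefully). Summing the inductive bound for $M/e$ over all points $e$, with each line of $M$ contributing its points, and using that lines have at most $q+1$ points, should produce exactly the factor $\tfrac{q}{q-1}$ after the dust settles; the excluded case $M \cong P_r$ is precisely where every line is full and the averaging is tight with no slack, which is why projective geometries must be removed. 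The main obstacle in the inequality is bookkeeping: making sure the contraction's cogirth is controlled and that the weight identities are exactly right.

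The real work, as the authors flag, is the equality characterization, and here I would argue that equality forces tightness at every step of whichever inductive argument is used. Tightness in Lemma~\ref{rank2} forces $w$ constant on every line that gets used; propagating this, condition (ii) (constancy of $w$ on a copy of $P_{k+1}$ containing the complement) should emerge from the fact that the lines near the "deleted" projective subgeometry are exactly the ones forced to be balanced. Condition (i) — that the complement of $M$ in $P_r$ is itself a projective geometry $P_k$ — I would get by showing that if the complement is \emph{not} a flat of $P_r$, then there is a line of $P_r$ meeting the complement in exactly one point, and such a line gives a cocircuit of $M$ that is strictly too small, contradicting equality; combined with a dimension count this pins the complement down to a $P_k$ with $1 \le k < r$ (the range forced by $M \ne P_r$ and $M$ having positive rank). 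Condition (iii), the inequality $w(N) \geq (q-1)w(E(M) - E(N))$ for affine-geometry restrictions $N$, is the subtlest: an $A_r$-restriction has $E(M) - E(N)$ lying in a hyperplane, so $E(N)$ is a cocircuit's worth of elements, and $w(N) \ge g^\ast(M) = \tfrac{q-1}{q} w(M) = \tfrac{q-1}{q}\big(w(N) + w(E(M)-E(N))\big)$ rearranges to exactly (iii); so this condition is automatic from equality and from $N$'s complement being a hyperplane. The genuine obstacle is verifying the converse — that (i), (ii), (iii) together suffice — which I expect to require exhibiting an explicit cocircuit of weight $\tfrac{q-1}{q} w(M)$, namely the complement of a well-chosen hyperplane that splits the weight evenly, with (ii) guaranteeing such a hyperplane exists and (iii) guaranteeing no smaller cocircuit sneaks in via an affine restriction.
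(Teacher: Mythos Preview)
Your contraction idea is the right opening and matches the paper: if some $e$ has $M/e \not\cong P_{r-1}$, then induction on $M/e$ together with $g^\ast(M/e)\ge g^\ast(M)$ and $w(M/e)=w(M)-w(e)$ already gives the bound (strictly, in fact). The gap is the complementary case. It can happen that $M/e\cong P_{r-1}$ for \emph{every} $e$ even though $M\not\cong P_r$; take $M=A_r$ or $M=P_r-\{p\}$. There your averaging cannot invoke the inductive hypothesis, and substituting Proposition~\ref{pgintro} for $M/e$ yields only $w(M)-w(e)\ge \frac{q^{r-1}-1}{q^{r-2}(q-1)}\,g^\ast(M)$, which is short of $\frac{q}{q-1}g^\ast(M)$ by $\frac{1}{q^{r-2}(q-1)}g^\ast(M)$; summing over all $e$ one checks the shortfall persists whenever $|E(M)|>q^{r-1}$, as for $P_r-\{p\}$ with $r\ge 3$. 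The paper's fix is concrete: take a line of $P_r$ meeting both $E(M)$ and its complement, let $e$ be a maximum-weight element of $M$ on that line and $Y$ the remaining points of $M$ on it, and apply induction to $M\del Y/e$, which is now guaranteed not to be $P_{r-1}$. The estimate $w(Y)\le(q-1)w(e)$ (from $|Y|\le q-1$) is exactly what makes the arithmetic close, and equality in it is what later forces (i) and (ii).

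On the equality analysis: your derivation that (iii) is necessary is correct once one knows an $A_r$-restriction is a cocircuit. Your argument for (i), however, has the direction reversed: if $M^c$ fails to be a flat there is a line meeting $M^c$ in \emph{at least two} points yet not contained in it (not ``exactly one''), and a line does not ``give a cocircuit'' --- rather, in the paper's setup such a line forces $|Y|\le q-2$, making the key inequality strict. For sufficiency you are missing the substantive step: assuming (i) and (ii), one must show that \emph{every} type-II cocircuit (those of shape $A_r-A_k$) has weight \emph{exactly} $\tfrac{q-1}{q}w(M)$, not merely that some hyperplane splits the weight evenly. This is the paper's internal sublemma, proved by counting how each copy of $P_{k+1}$ through $M^c$ meets a given type-II cocircuit; condition (ii) is what converts that count into the exact fraction $\tfrac{q-1}{q}$. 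With that in hand, (iii) handles the type-I cocircuits and the converse follows.
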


\begin{proof}
We begin by proving the displayed inequality by induction on $r$. Lemma~\ref{rank2} gives the result when $r=2$, so suppose $r\geq 3$. If there is an $e$ in $E(M)$ with $M/e\not\cong P_{r-1}$, then, by induction,
\begin{equation*}
(q-1)w(M)>(q-1)w(M/e)\geq qg^\ast(M/e)\geq qg^\ast(M).
\end{equation*}
Thus we may assume that $M/e\cong P_{r-1}$ for all $e$ in $E(M)$. Take a line of $P_r$ that meets both $E(M)$ and $E(P_r)-E(M)$. Let $X$ be the set of elements of $M$ on this line and $e$ be a maximum-weight element of $X$. Let $Y=X-e$. Note that $\vert Y\vert\leq q-1$, so
\begin{equation}
\label{littlenotpg}
w(Y)\leq w(e)(q-1).
\end{equation}
Observe that $M\del Y/e$ has rank $r-1$ but is not isomorphic to $P_{r-1}$ so, by the induction assumption,
\begin{equation*}
(q-1)w(M\del Y/e)\geq qg^\ast(M\del Y/e).
\end{equation*}
Now
\begin{equation*}
w(M\del Y/e)=w(M)-w(e)-w(Y),
\end{equation*}
and
\begin{equation*}
g^\ast(M\del Y/e)\geq g^\ast(M)-w(Y).
\end{equation*}
Thus
\begin{equation*}
(q-1)w(M)\geq qg^\ast(M)+w(e)(q-1)-w(Y),
\end{equation*}
so, by~(\ref{littlenotpg}),
\begin{equation*}
(q-1)w(M)\geq qg^\ast(M)
\end{equation*}
as desired.

Next we characterize when equality is achieved in the last bound. Let $M^c$ be the complement of the fixed embedding of $M$ in $P_r$. When $M^c\cong P_k$ for $1\leq k<r$, a hyperplane of $P_r$ either contains this $P_k$ or meets it in a $P_{k-1}$. Thus a cocircuit of $M$ is isomorphic to either $A_r$ or $A_r-A_k$. We call these \emph{type-I} and \emph{type-II cocircuits}, respectively, noting that there are no type-I cocircuits when $k=r-1$.

\begin{sublemma}
\label{typeiilemma}
Suppose $M$ satisfies (i) and (ii). If $C^\ast$ is a type-II cocircuit of $M$, then $$w(C^\ast)=\frac{q-1}{q}w(M).$$
\end{sublemma}

As $M$ satisfies (i), $M^c\cong P_k$. Since $C^\ast$ is a type-II cocircuit, there is a restriction $A$ of $P_r$ isomorphic to $A_r$ such that $A$ meets $M^c$ and $C^\ast=E(M)\cap E(A)$. Let $H$ be the hyperplane of $P_r$ that is the complement of $A$.

Now, $P_r$ consists of $\frac{q^{r-k}-1}{q-1}$ copies of $P_{k+1}$ containing $M^c$, and the pairwise intersection of these copies is $M^c$. Thus $M$ is the disjoint union of $\frac{q^{r-k}-1}{q-1}$ copies of $A_{k+1}$. By (ii), the elements in each $A_{k+1}$ have the same weight. To complete the proof of~\ref{typeiilemma}, we show that $C^\ast$ contains exactly $\frac{q-1}{q}$ of the elements of each $A_{k+1}$.

Consider the complementary $A_{k+1}$ to $M^c$ in a fixed $P_{k+1}$. Note that $P_{k+1}$ consists of $q+1$ copies of $P_k$, including $M^c$, that contain $H\cap E(M^c)$, which is isomorphic to $P_{k-1}$. Therefore, this $A_{k+1}$ is the disjoint union of $q$ copies of $A_k$. Now $H$ meets $P_{k+1}$ at a $P_k$ distinct from $M^c$. Thus $A$ meets $P_{k+1}$ in a set that is the union of $q$ disjoint copies of $A_k$, one of which is in $M^c$. This implies that $C^\ast\cap A_{k+1}$ is the disjoint union of $q-1$ copies of $A_k$, and~\ref{typeiilemma} follows.

Now assume that $\tfrac{w(M)}{g^\ast(M)}=\tfrac{q}{q-1}$. Then equality holds in~(\ref{littlenotpg}) so $\vert Y\vert=q-1$ and $w(y)=w(e)$ for all $y\in Y$. The former implies that every line of $P_r$ that meets both $M$ and $M^c$ contains exactly $q$ points of $M$. This means that every line that contains two points of $M^c$ lies entirely in $M^c$. Thus $M^c$ is a flat of $P_r$, proving (i).

As $w(y)=w(e)$ for all $y\in Y$, it follows that $w$ is constant on each line of $P_r$ that meets both $M$ and $M^c$. Since a $P_k$ contained in a $P_{k+1}$ meets every line of the $P_{k+1}$, (ii) is satisfied. It now follows from~\ref{typeiilemma} that $\tfrac{w(M)}{g^\ast(M)}=\frac{q}{q-1}$ if and only if $M$ satisfies (i) and (ii), and the type-I cocircuits of $M$ have weight at least $\tfrac{q-1}{q}w(M)$. It is straightforward to check that this third condition is equivalent to (iii), so the theorem holds.
\end{proof}

The reader may find condition (iii) of Theorem~\ref{notpgweight} unsatisfying, and the next proposition offers a potential replacement, (iii)$^\prime$. The example that follows Proposition~\ref{extracondition} shows that conditions (i), (ii), and (iii)$^\prime$ do not guarantee $\frac{w(M)}{g^\ast(M)}=\frac{q}{q-1}$ for a matroid $M$ meeting the hypotheses of Theorem~\ref{notpgweight}. In addition, the example illustrates the potential difficulty of finding a satisfactory replacement for (iii).

\begin{proposition}
\label{extracondition}
Let $M$ be a simple, rank-$r$ matroid representable over $GF(q)$, and let $w$ be a weight function on $M$. Suppose that $M\not\cong P_r$ and that $\tfrac{w(M)}{g^\ast(M)}=\frac{q}{q-1}$. Then
\begin{itemize}
\item[(iii)$^\prime$]{$q^{r-1}w(e)\leq w(M)$ for all $e$ in $E(M)$.}
\end{itemize}
\end{proposition}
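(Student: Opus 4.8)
The plan is to run the equality hypothesis through Theorem~\ref{notpgweight}. Since $M\not\cong P_r$ and $\tfrac{w(M)}{g^\ast(M)}=\tfrac{q}{q-1}$, that theorem tells us $M$ satisfies (i)--(iii): for a fixed embedding of $M$ in $P_r$, the complement $M^c$ of $M$ is a flat of $P_r$ with $M^c\cong P_k$ for some $k$, $1\le k<r$, and $w$ is constant on $E(M)\cap E(P)$ for every copy $P$ of $P_{k+1}$ containing $M^c$. Fix $e\in E(M)$ and let $F=\cl(M^c\cup\{e\})$, a flat of $P_r$ of rank $k+1$ that contains $M^c$; then $|E(M)\cap F|=\tfrac{q^{k+1}-1}{q-1}-\tfrac{q^{k}-1}{q-1}=q^k$, and by (ii) the value of $w$ on $E(M)\cap F$ is the constant $w(e)$, so $w(E(M)\cap F)=q^k w(e)$. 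Thus it suffices to prove $w(E(M)\cap F)\le w(M)/q^{\,r-k-1}$.

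If $k=r-1$ then $F=P_r$, so $E(M)\cap F=E(M)$ and the identity above already gives $w(M)=q^{r-1}w(e)$, with equality (as one expects: here $M\cong A_r$ carrying a constant weight). So assume $k\le r-2$, and average $w(E(M)\cap H)$ over the $\tfrac{q^{\,r-k-1}-1}{q-1}$ hyperplanes $H$ of $P_r$ containing $F$. For each such $H$ we have $M^c\subseteq H$, so $E(P_r)-H$ is a subset of $E(M)$ that is an $A_r$-restriction of $M$; applying condition (iii) to it and using $w(E(M)\cap H)=w(M)-w(E(M)-H)$ gives $w(E(M)\cap H)\le\tfrac1q w(M)$ for every $H\supseteq F$. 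On the other hand, a routine incidence count in $P_r$ — each element of $E(M)\cap F$ lies in every hyperplane $H\supseteq F$, while each $f\in E(M)-F$ lies in exactly the $\tfrac{q^{\,r-k-2}-1}{q-1}$ such hyperplanes that contain the rank-$(k+2)$ flat $\cl(F\cup\{f\})$ — evaluates $\sum_{H\supseteq F}w(E(M)\cap H)$ as $\tfrac{q^{\,r-k-1}-1}{q-1}\,w(E(M)\cap F)+\tfrac{q^{\,r-k-2}-1}{q-1}\bigl(w(M)-w(E(M)\cap F)\bigr)$. Comparing this exact value with the per-term bound $\tfrac1q w(M)$ and simplifying — the coefficient of $w(E(M)\cap F)$ collapses to $q^{\,r-k-2}$, the remaining terms to $\tfrac1q w(M)$ — gives $q^{\,r-k-2}\,w(E(M)\cap F)\le\tfrac1q w(M)$, i.e.\ $w(E(M)\cap F)\le w(M)/q^{\,r-k-1}$. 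Combined with $w(E(M)\cap F)=q^k w(e)$, this is exactly $q^{r-1}w(e)\le w(M)$.

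The argument is short once the structure from Theorem~\ref{notpgweight} is in hand; the main work is the bookkeeping. I would check carefully that $E(P_r)-H$ really is an $A_r$-restriction of $M$ for every hyperplane $H\supseteq F$ (it is $P_r$ with a hyperplane removed, and it lies inside $E(M)$ precisely because $M^c\subseteq H$), that the two incidence counts are correct, and that the boundary case $k=r-2$ behaves (then $\cl(F\cup\{f\})=P_r$, no hyperplane contains it, the second count is $0$, $F$ is itself the unique hyperplane containing $F$, and the computation still goes through). I expect this counting, rather than any conceptual step, to be where care is needed.
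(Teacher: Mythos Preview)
Your proof is correct and is essentially the paper's argument carried out in $P_r$ rather than in the quotient: the paper contracts $M^c$ to pass to $M'\cong P_{r-k}$ (so your flat $F$ becomes the point $e'$ with $w(e')=q^k w(e)$), then sums the inequality $\tfrac{1}{q-1}w(C^\ast)\ge w(H)$ over the hyperplanes of $M'$ through $e'$ using the same incidence counts $t=\tfrac{q^{r-k-1}-1}{q-1}$ and $s=\tfrac{q^{r-k-2}-1}{q-1}$ that you use for hyperplanes of $P_r$ through $F$. Your per-hyperplane bound $w(E(M)\cap H)\le\tfrac1q w(M)$ from condition~(iii) is exactly the paper's bound rewritten, so the two computations coincide line for line.
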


\begin{proof}
By Theorem~\ref{notpgweight}, $M^c=P_k$. If $k=r-1$, then Theorem~\ref{notpgweight}(ii) implies that (iii)$^\prime$ holds with equality. Thus we may assume that $k<r-1$. Extend the weight function of $M$ to $P_r$ by assigning each element of $M^c$ a weight of one. Then contract $M^c$ from $P_r$ to form a weighted matroid $M'\cong P_{r-k}$. Fix an element $e$ in $E(M)$, and let $e'$ be the image of $e$ in $M'$. Note that $w(M')=w(M)$ and $w(e')=q^kw(e)$ under this transformation. Moreover, the type-I cocircuits of $M$ correspond to the cocircuits of $M'$, so the weight of each cocircuit of $M'$ equals the weight of the corresponding type-I cocircuit of $M$.

Let $C^\ast$ be a cocircuit of $M'$ that avoids $e'$ and let $H$ be the complementary hyperplane to $C^\ast$ in $M'$. Since $\frac{w(M)}{g^\ast(M)}=\frac{q}{q-1}$, it follows that
\begin{equation*}
\frac{q}{q-1}w(C^\ast)\geq w(M'),
\end{equation*}
and subtracting $w(C^\ast)$ from each side produces
\begin{equation}
\label{repeatme}
\frac{1}{q-1}w(C^\ast)\geq w(H).
\end{equation}

Note that~(\ref{repeatme}) holds for an arbitrary cocircuit of $M'$ avoiding $e'$, so we have such an inequality for every such cocircuit. Moreover, $C^\ast$ and $H$ partition $E(M')$ so, for a fixed $f\in E(M'\del e')$, its weight contributes to exactly one side of each inequality. Now, there are $\frac{q^{r-k-1}-1}{q-1}$ total inequalities as this is the number $t$ of hyperplanes of $M'$ containing $e'$. Similarly, $w(f)$ contributes to the right-hand side of exactly $\frac{q^{r-k-2}-1}{q-1}$ of these inequalities as this is the number $s$ of hyperplanes of $M'$ containing both $e'$ and $f$. Hence $w(f)$ contributes to the left-hand side of $t-s$ of these inequalities. Summing these inequalities gives
\begin{equation*}
\frac{t-s}{q-1}w(M'\del e')\geq sw(M'\del e')+tw(e'),
\end{equation*}
and this simplifies to
\begin{equation*}
w(M')\geq q^{r-k-1}w(e').
\end{equation*}
Finally, we substitute $w(M)$ for $w(M')$ and $q^kw(e)$ for $w(e')$ to obtain
\begin{equation*}
w(M)\geq q^{r-1}w(e)
\end{equation*}
as desired.
\end{proof}

\begin{example}
Let $q=2$ and let $M=P_4-p$ for some $p\in E(P_4)$. Then $M\cong P_4-P_1$. Take a hyperplane $H$ of $P_4$ containing $p$, and note that $H$ has $\vert P_3\vert$ elements. Then $H-p$ is a hyperplane of $M$ and the corresponding cocircuit $C^\ast$ is type-I and has $\vert A_4\vert$ elements.

Assign the weight 2 to each element of $H-p$ and the weight 1 to each element of $C^\ast$. Then
\begin{equation*}
w(M)=2(\vert P_3\vert-1)+1\cdot\vert A_4\vert=2(6)+8=20.
\end{equation*}
Observe that conditions (i) and (ii) of Theorem~\ref{notpgweight} hold and, since, for all $e$ in $E(M)$,
\begin{equation*}
q^{r-1}w(e)\leq 2^3(2)<20=w(M),
\end{equation*}
so does (iii)$^\prime$. However, $w(C^\ast)=8$, so the equation
\begin{equation}
\label{equality}
\frac{w(M)}{g^\ast(M)}=\frac{q}{q-1}
\end{equation}
fails.

Now, in $P_4$, take a line in $C^\ast\cup p$ and another in $H$ that each meet in $\{p\}$. Swap the weights 1 and 2 on the elements of $M$ on these lines. Note that (i), (ii), and (iii)$^\prime$ continue to hold, and $w(M)$ is unchanged. However, it is straightforward to check that the weights of the type-I cocircuits of $M$ are at least 10, so~(\ref{equality}) holds by Theorem~\ref{notpgweight}. Thus, characterizing the matroids for which equality holds in Theorem~\ref{notpgweight} requires not only restricting the weights themselves, but also controlling their distribution.
\end{example}

Finally, we prove a proposition equivalent to Proposition~\ref{pgintro} stated here in terms of weights.

\begin{proposition}
Let $M$ be a matroid isomorphic to $PG(r-1,q)$ and $w$ be a weight function on $E(M)$. Then $$\frac{w(M)}{g^\ast(M)}\geq\frac{q^r-1}{q^{r-1}(q-1)}.$$ Moreover, equality holds if and only if $w$ is constant.
\end{proposition}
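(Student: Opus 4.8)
The plan is to induct on $r$, using Lemma~\ref{rank2} as the base case $r=2$ (and the trivial case $r=1$, where $PG(0,q)$ is a single coloop and $\tfrac{w(M)}{g^\ast(M)}=1=\tfrac{q-1}{q-1}$). First I would record the parameters of $M=PG(r-1,q)$: it has $\tfrac{q^r-1}{q-1}$ elements, each of its hyperplanes is a copy of $PG(r-2,q)$ with $\tfrac{q^{r-1}-1}{q-1}$ elements, so every cocircuit of $M$ consists of exactly $q^{r-1}$ elements. With this, the displayed inequality rearranges to $g^\ast(M)\le\tfrac{q^{r-1}(q-1)}{q^r-1}\,w(M)=\tfrac{q^{r-1}}{|E(M)|}\,w(M)$; equivalently, the minimum weight of a cocircuit is at most $q^{r-1}$ times the average element weight.

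For the inductive step, with $r\ge 3$, I would take an element $e$ of maximum weight and pass to the weighted contraction $M/e$. As in the Preliminaries, $\simp(M/e)\cong PG(r-2,q)$, the contracted weight satisfies $w(M/e)=w(M)-w(e)$ since each point other than $e$ lies on a unique line through $e$, and every cocircuit of $M/e$ is a cocircuit of $M$ avoiding $e$ of the same weight, so $g^\ast(M/e)\ge g^\ast(M)$. The rank-$(r-1)$ case then gives $g^\ast(M)\le g^\ast(M/e)\le\tfrac{q^{r-2}(q-1)}{q^{r-1}-1}\bigl(w(M)-w(e)\bigr)$, and a short computation shows the right-hand side is at most $\tfrac{q^{r-1}(q-1)}{q^r-1}\,w(M)$ exactly when $w(e)\ge\tfrac{q-1}{q^r-1}\,w(M)$, which holds because $e$ has maximum, hence at least average, weight. (Alternatively, the inequality is immediate from the double count $\sum_{C^\ast}w(C^\ast)=q^{r-1}w(M)$: there are $\tfrac{q^r-1}{q-1}$ cocircuits and each element lies in $q^{r-1}$ of them, so the minimum cocircuit weight is at most the average.)

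For the equality characterization, the ``if'' direction is a direct check: if $w\equiv c$ then $w(M)=c\,\tfrac{q^r-1}{q-1}$ and every cocircuit has weight $cq^{r-1}$, giving $\tfrac{w(M)}{g^\ast(M)}=\tfrac{q^r-1}{q^{r-1}(q-1)}$. For ``only if'', I would reuse the contraction: if equality holds in the proposition, then the chain of inequalities in the inductive step must all be equalities, which forces $w(e)=\tfrac{q-1}{q^r-1}\,w(M)$ for the maximum-weight element $e$; that is, the maximum weight equals the average weight, so $w$ is constant. The cases $r\le 2$ are covered by Lemma~\ref{rank2}. (If one instead runs the averaging proof of the inequality, equality forces every cocircuit, hence every hyperplane, to have the same weight, and one then deduces that $w$ is constant by a downward induction on the rank of a flat: the rank-$(k+1)$ flats through a fixed rank-$k$ flat $F$ partition $E(M)\setminus F$ into $\tfrac{q^{r-k}-1}{q-1}$ parts, which determines $w(F)$ independently of $F$ whenever $k<r-1$, so pushing down to $k=1$ shows all points have equal weight.)

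I do not anticipate a real obstacle. The inequality is essentially a one-line averaging (or one-step contraction) argument, and the equality case, which might look like it needs extra work, collapses as soon as one contracts a maximum-weight element: ``equality forces $w(e)$ to equal the average'' is the same as ``$w$ is constant''. The only things to handle with care are the bookkeeping for the weighted contraction --- that $g^\ast$ does not decrease and that $w(M/e)=w(M)-w(e)$ --- and the small-rank base cases.
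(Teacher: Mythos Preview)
Your argument is correct, and it is genuinely different from the paper's. Both proofs induct on $r$ with Lemma~\ref{rank2} as the base, but the inductive steps diverge. The paper fixes a minimum-weight cocircuit $C^\ast$, takes its complementary hyperplane $H\cong P_{r-1}$, picks a maximum-weight hyperplane $Z$ of $H$, and then combines two applications of the inductive hypothesis: one on $M/Z\cong P_2$ (to bound $w(C^\ast)$ against $w(Y)=w(H)-w(Z)$) and one on $H$ itself. For the equality case, the paper traces equality through both of these sub-inductions, deducing first that $w$ is constant on $H$, then that $w$ is constant on every hyperplane through $Z$, and hence on $E(M)$.

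Your route is shorter: you contract a single maximum-weight element $e$, apply the rank-$(r-1)$ case once to $M/e$, and reduce the desired bound to $w(e)\ge\tfrac{q-1}{q^r-1}\,w(M)$, which is just ``max $\ge$ average''. The equality analysis then collapses immediately, since equality forces ``max $=$ average''. The averaging double count you mention as an alternative is also valid and gives the inequality in one line; the contraction version has the advantage that the equality characterization comes for free without the separate downward induction on flats. What the paper's more elaborate decomposition buys is perhaps a closer structural parallel to the proof of Theorem~\ref{notpgweight}, but your argument is the more economical proof of this proposition.
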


\begin{proof}
We prove the inequality by induction on $r$. It is trivial when $r=1$ and is true for $r=2$ by Lemma~\ref{rank2}, so suppose $r\geq 3$. Let $C^\ast$ be a cocircuit of $M$ of weight $g^\ast(M)$ and let $H$ be the complementary hyperplane to $C^\ast$ in $M$. Choose $Z$ as a maximum-weight hyperplane of $H$. Then, letting $Y$ be the complement of $Z$ in $H$, we have
\begin{equation}
\label{fullpglong}
\frac{w(M)}{g^\ast(M)}=\frac{w(C^\ast)+w(Y)+w(Z)}{w(C^\ast)}=1+\frac{w(Y) + w(Z)}{w(C^\ast)}.
\end{equation}
Observe that the weighted contraction of $Z$ from $M$ is isomorphic to $P_2$ so, by the inequality for $r=2$, we get that
\begin{equation*}
\frac{w(M/Z)}{g^\ast(M/Z)}\geq \frac{q+1}{q}.
\end{equation*}
Now, since $C^\ast$ is also a minimum-weight cocircuit of $M/Z$, we rewrite this inequality as
\begin{equation}
\label{fullpgindr2}
\frac{w(Y)+w(C^\ast)}{w(C^\ast)}\geq \frac{q+1}{q}.
\end{equation}
It follows that $qw(Y)\geq w(C^\ast)$. Substituting into~(\ref{fullpglong}), we obtain
\begin{equation}
\label{fullpglong2}
\frac{w(M)}{g^\ast(M)}\geq 1+\frac{1}{q}\cdot\frac{w(Y) + w(Z)}{w(Y)}.
\end{equation}

Finally, the hyperplane $H$ is isomorphic to $P_{r-1}$, and our choice of $Z$ makes $Y$ a minimum-weight cocircuit of $H$. Thus, by induction,
\begin{equation}
\label{fullpgind}
\frac{w(Y) + w(Z)}{w(Y)}\geq \frac{q^{r-1}-1}{q^{r-2}(q-1)}.
\end{equation}
Substituting~(\ref{fullpgind}) into~(\ref{fullpglong2}) gives the desired inequality.

One easily checks that, when $w$ is constant,
\begin{equation}
\label{fullpgequality}
\frac{w(M)}{g^\ast(M)}=\frac{q^r-1}{q^{r-1}(q-1)}.
\end{equation}
We now use induction on $r$ to prove that the elements of $M$ have the same weight when~(\ref{fullpgequality}) holds. When $r=1$, this is trivial, and Lemma~\ref{rank2} handles the rank-2 case. 

Suppose $r\geq 3$. Since~(\ref{fullpgequality}) holds, equality holds in~(\ref{fullpgindr2}) and~(\ref{fullpgind}). It follows from the latter using the induction assumption that the weight function $w$ on $E(M)$ is constant on the hyperplane $H$ of $M$. From the former, we deduce that, in $M/Z$, every point has equal weight. It follows that every hyperplane $H'$ of $M$ containing $Z$ has the same weight. Hence the cocircuit $E(M)-H'$ has the same weight as $C^\ast$. Replacing $H$ by $H'$, we deduce that $w$ is constant on the elements of $H'$. Letting $H'$ range over all of the hyperplanes of $M$ containing $Z$, we deduce that $w$ is constant on $E(M)$.
\end{proof}



\begin{thebibliography}{9}

\bibitem{BB}
R.C. Bose, R.C. Burton, A characterization of flat spaces in a finite geometry and the uniqueness of the Hamming and MacDonald codes, J. Combinatorial Theory 1 (1966), 96--104.

\bibitem{oxley} 
J. Oxley, Matroid Theory, Second edition, Oxford University Press, New York, 2011.

\bibitem{vardy}
A. Vardy, The intractability of computing the minimum distance of a code, IEEE Trans. Inform. Theory 43 (1997), 1757--1766.

\end{thebibliography}
\end{document}